\newcommand{\R}{\mathbb{R}}
\newcommand{\M}[0]{{S}}
\newcommand{\PP}[0]{\mathcal{P}}
\newcommand{\Prob}[0]{\text{Prob}}
\newcommand{\diag}[0]{\text{diag}}
\newtheorem{theorem}[]{Theorem}
\newtheorem{corollary}[]{Corollary}
\newtheorem{lemma}[]{Lemma}
\newtheorem{example}[]{Example}
\newtheorem{definition}[]{Definition}
\journal{Systems and Control Letters}
\begin{document}

\begin{frontmatter}



\title{Lyapunov Densities For Markov Processes: An Application To Quantum Systems With Non-Demolition Measurements} 


\author[ozkan]{Özkan Karabacak\corref{ozkanc}} 
\cortext[ozkanc]{Corresponding author}
\affiliation[ozkan]{organization={Kadir Has University, Department of Mechatronics Engineering},
            addressline={Cibali Mah.}, 
            city={İstanbul},
            postcode={34083}, 
            country={Turkey}}

\author[horia]{Horia Cornean} 

\affiliation[horia]{organization={Aalborg University, Department of Mathematical Sciences},
            addressline={Skjernvej 4A}, 
            city={Aalborg},
            postcode={9220}, country={Denmark}}

\author[rafael]{Rafael Wisniewski} 

\affiliation[rafael]{organization={Aalborg University, Department of Electronic Systems},
            addressline={Fredrik Bajers Vej 7}, 
            city={Aalborg},
            postcode={9220},
            country={Denmark}}

\begin{abstract}
Stochastic convergence of discrete time Markov processes has been analysed based on a dual Lyapunov approach. Using some existing results on ergodic theory of Markov processes, it has been shown that existence of a properly subinvariant function (counterpart of the Lyapunov density in deterministic systems) implies sweeping of a Markov process out of the sets where this function is integrable. Such a function can be used as a certificate of convergence in probability of a stochastic system. We apply this technique to Markov processes induced by a quantum system with non-demolition measurement and propose dual Lyapunov certificates 
{to certify sweeping.}
\end{abstract}




\begin{keyword}
Dual Lyapunov theory\sep sweeping of Markov processes\sep quantum non-demolition measurements


\end{keyword}

\end{frontmatter}



\section{Introduction}
\label{sec:Intro}

Stability certificates are used for the analysis and control of both deterministic and stochastic dynamical systems. The existence of Lyapunov functions and dual Lyapunov functions (also called Lyapunov density) can imply certain asymptotic behaviour for deterministic systems. On the other hand, Lyapunov-type functions can also be employed for stochastic systems to certify certain types of stochastic convergence \citep{Kushner1967STOCHASTICCONTROL}. More specifically, Bielecki functions certify an asymptotic property called sweeping \citep{Lasota1994ChaosNoise} for Markov stochastic processes. On the other hand, the existence of a subinvariant, absolutely continuous measure 
implies the so-called Foguel alternative: either there exists an invariant density or the system is sweeping. In this work, we show that the existence a properly subinvariant absolutely continuous measure directly implies sweeping as it prohibits the existence of an invariant density. This result, which can be seen as a  dual version of the Bielecki function theorem, is a consequence of the literature on ergodic theory of Markov processes \citep{Foguel1969TheProcesses,Lasota1994ChaosNoise,Komorowski1989AsymptoticOperators} but is not stated and proven elsewhere.

Our work is inspired by an alternative approach to Lyapunov theory for deterministic systems, which is the so-called dual Lyapunov approach developed by \citep{Rantzer2001ATheorem} and has found applications in control of nonlinear autonomous systems \citep{Prajna2004NonlinearOptimization}, safety and reachibility analysis \citep{Prajna2007ConvexSystems,Kivilcim2019SafeDensity}, stability certification of switched systems \citep{Karabacak2020AlmostSwitching} and certification of synchronization of coupled oscillators \citep{Kudeyt2022CertificationOscillators}. Dual Lyapunov approach leverages measurable functions (called Rantzer's function or Lyapunov density) that are Radon-Nikodym derivatives of absolutely continuous subinvariant measures (called Lyapunov measure in \citep{Vaidya2010NonlinearMeasure}). It can be shown via standard techniques from potential theory \citep{Syski1973PotentialChains} that the existence of a properly subinvariant measure, which is necessarily infinite under the assumption that solutions exist globally in time, implies that almost all solutions converge to the set near which the measure is infinite \citep{Karabacak2018OnSets}. 

Applications of the dual Lyapunov theory to stochastic systems appeared for stochastic differential equations \citep{VanHandel2006AlmostStability} and for discrete-time maps with independent, identically distributed noise \citep{Vaidya2015StochasticMeasure}. However, up to our knowledge, dual Lyapunov approach has not been applied to general Markov processes, where dynamics is given by a linear operator (called Markov operator) on the set of Lebesgue integrable functions \citep{Foguel1969TheProcesses,Lasota1994ChaosNoise}. 

It has been shown in \citep{Foguel1966LimitProcesses} that a Markov process is sweeping if it has no invariant density but admits a subinvariant function. In \citep{Komorowski1989AsymptoticOperators}, various extensions of this result have been obtained. \citet{KOMORNIK1995THEOPERATORS} discuss that if the system is dissipative, then the condition on the existence of a subinvariant function is redundant; and they characterize the sweeping property based on the absence of an invariant density and the existence of a particular property, which implies that the system is dissipative and therefore a construction of a subinvariant function is possible. In this work, to come up with an easily checkable condition, we waive the condition on the absence of an invariant density (which might be nontrivial to show as in \cite[theorem~4.2]{Tyrcha1988AsymptoticCycle}) and show that the existence of a properly subinvariant function implies the sweeping property.

We apply our main result to a quantum system with non-demolition measurements. Non-demolition measurements are represented by invertible transformations providing information about the state without collapsing it onto certain subspaces. This type of measurements have been utilized to stabilize a photon box system via feedback control, leading to a Nobel prize \citep{Haroche2013NobelBoundary}. Stochastic convergence of this system has been analyzed in \citep{Amini2012StabilizationCase-study,Amini2013FeedbackDelays}, where stochastic Lyapunov functions \citep{Kushner1967STOCHASTICCONTROL} were employed. We show that the dual Lyapunov approach can also provide (not yet new) results on stochastic convergence for the photon box system.


\section{Sweeping Markov Operators}
In this section, 
we prove that the existence of a properly subinvariant function implies sweeping of the process from an admissible family of subsets on which the function has finite integral. 
As an example, we show that using such a subinvariant function sweeping can be shown more directly for the cell size distribution in a cell cycle processes \citep{Tyson1986MathematicalCycle, Tyrcha1988AsymptoticCycle,Komorowski1989AsymptoticOperators}.  
Finally, we apply our main result to iterative function systems where the probability of choice of the active function depends on the current state \citep{Barnsley1988InvariantProbabilities}. This will be used, in the next section, to prove stochastic convergence of trajectories for quantum systems involving non-demolition measurements.

\subsection{Preliminaries: Hopf Decomposition, Subinvariant Functions and Sweeping}
If not stated otherwise, we follow the setting used by \cite{Foguel1969TheProcesses,Lasota1994ChaosNoise,Komorowski1989AsymptoticOperators}, where all arguments hold up to zero measure sets. Let  $(X,\Sigma,m)$ be a $\sigma$-finite measure space. For a measurable function $f:X\to X$, $\|f\|$ denotes the $L_1$-norm of $f$, namely $\|f\|=\int |f| \ dm$. $L_1(X)$ and $L_\infty(X)$ denote the set of measurable functions with a finite $L_1$-norm and the set of bounded measurable functions, respectively, where two functions differing only on a measure zero set are assumed to be identical. A linear operator $\mathcal P:L_1(X)\to L_1(X)$ is a Markov operator if $\mathcal P$ is 
a contraction ($\|\PP f\|\leq \|f\|$) and positive ($f\geq 0 \implies \mathcal Pf\geq 0$). In this case, $(X,\Sigma,m,\mathcal P)$ is called a Markov process \citep{Foguel1969TheProcesses}. An equivalent probabilistic definition of a Markov process is given via transition probabilities $P(x,A)$ defined for all $x\in X$ and for all $A\in\Sigma$ such that the conditions
\begin{align}
&0\leq P(x,A)\leq 1,
\label{eq:TransitionProbability1}
\\
&P(\cdot,A) \text{ is measurable},
\label{eq:TransitionProbability2}
\\
&A=\bigcup_i A_i
\text{ for disjoint } A_i\text{'s} 
\implies P(x,A)=\sum_iP(x,A_i),
\label{eq:TransitionProbability3}
\\
&m(A)=0\implies P(x,A)=0 \text{ for almost all } x\in X
\label{eq:TransitionProbability4}
\end{align}
are satisfied for each $A\in\Sigma$.
The correspondence between these equivalent definitions are given via
\begin{equation}
P(x,A)=U1_A(x),
\end{equation}
where $1_A$ is the characteristic function of $A$ and $U:L_\infty(X)\to L_\infty(X)$ is the adjoint operator of $\PP$. Equivalently, the Markov operator $\PP$ satisfies
\begin{align}
    \int_A (\mathcal{P} \rho)(x) m(dx) &= \int_{X} (\mathcal{P} \rho)(x) 1_A(x) m(dx)\\ &= \int_X  \rho (x) P(x,A) m(dx) , 
\end{align}
for all $A \in \Sigma$,
where duality between $\mathcal P$ and $\mathcal U$ is used in the second equality.

\begin{definition}[Hopf decomposition \citep{Foguel1969TheProcesses}]
\label{def:Hopf}
Let $u\in L_1(X)$ and $u>0$. The set
$$C:=\left\{ x\in X:\,  \sum_{k=0}^{\infty}\big (\PP^ku\big )(x)=\infty\right\}$$
does not depend on $u$ and it is called the conservative part of $X$, while $D=X\setminus C$ is called the dissipative part. $\PP$ is said to be dissipative if $X=D$ (up to a set of measure zero).
\end{definition}
{
The following lemma characterizes the result of the infinite sum in Definition~\ref{def:Hopf}, when $u\geq 0$.
\begin{lemma}[{\citet[p.11]{Foguel1969TheProcesses}}]
\label{lem:FromFoguelBookp11}
If $u\in L_1(X)$ and $u\geq 0$ then
\begin{equation*}
    \sum_{k=0}^{\infty}\big (\PP^ku\big )(x)\in\begin{dcases}
    \{0,\infty\}&\text{for }x\in C\\
    [0,\infty)&\text{for }x\in D
    \end{dcases}
\end{equation*}
\end{lemma}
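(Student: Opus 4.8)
Set $g:=\sum_{k=0}^{\infty}\PP^k u\in[0,\infty]$. Since $\PP$ extends to a positive operator on nonnegative measurable functions that commutes with monotone limits, one has $\PP g=\sum_{k\ge1}\PP^k u=g-u$ and, by iteration,
\begin{equation*}
g=\sum_{k=0}^{n-1}\PP^k u+\PP^n g\qquad(n\ge1),
\end{equation*}
so $(\PP^n g)_n$ is non-increasing. For the dissipative part, I would fix any strictly positive $w_0\in L_1(X)$ and put $w:=u+w_0>0$. By Definition~\ref{def:Hopf} the set $D$ does not depend on the chosen strictly positive function, hence $D=\{x:\sum_k\PP^k w(x)<\infty\}$ up to a null set; since $0\le u\le w$ and $\PP$ is positive, $g\le\sum_k\PP^k w<\infty$ a.e.\ on $D$, which is the second alternative.

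It then remains to prove that $E:=\{x\in C:0<g(x)<\infty\}$ is null. First note that $C$ is invariant under the process: with $W:=\sum_k\PP^k w$ one has $\PP W=W-w$, so $\PP W<\infty$ a.e.\ on $D$, while $\PP(W\cdot 1_C)\le\PP W$ forces $\PP 1_C=0$ a.e.\ on $D$, i.e.\ $P(x,C)=1$ for a.e.\ $x\in C$; the same reasoning applied to $g$ shows that $\{g=\infty\}$ and $\{g>0\}$ are invariant as well. Restricting $\PP$ to $C$ therefore yields a genuinely conservative Markov operator on $L_1(C)$, and I would split $u=u\cdot 1_C+u\cdot 1_D$. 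On $C$ the potential $\sum_k\PP^k(u\cdot 1_C)$ coincides with the potential of $u\cdot 1_C$ for the restricted operator, and comparing it with the potential of the everywhere positive $w$ by the Chacon--Ornstein ratio ergodic theorem gives that it is $0$ or $\infty$ a.e.\ on $C$: the ratio limit is an invariant function which vanishes exactly on an invariant set on which $u\cdot 1_C$, and hence its potential, is zero. The mass leaking from $D$ into $C$ is, after one step, the restricted potential of an $L_1(C)$ ``entrance density'' and obeys the same $\{0,\infty\}$ dichotomy. Adding two $\{0,\infty\}$-valued functions, $g\in\{0,\infty\}$ a.e.\ on $C$, so $E$ is null.

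The crux is this last step. The tempting shortcut --- ``$g$ is $\PP$-excessive and excessive functions are invariant on the conservative part, so its defect $u$ vanishes there'' --- is not directly available, because $g$ need not lie in $L_1(X)$ and because mass flowing from $D$ into $C$ prevents $g|_C$ from being the potential of $u|_C$ alone. Establishing the forward-invariance of $C$ first, and only then peeling off the dissipative contribution, is precisely what legitimizes the reduction to a conservative operator; the ratio ergodic theorem (equivalently, the Riesz-type decomposition of excessive functions of a conservative operator into a potential plus an invariant part) then delivers the $0$--$\infty$ dichotomy. As everywhere in this setting, all assertions are meant modulo sets of measure zero.
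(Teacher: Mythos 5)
First, a point of reference: the paper does not prove this lemma at all --- it is imported verbatim from Foguel's book (p.~11) --- so there is no in-paper argument to compare yours against, and I am judging your proof on its own. Your dissipative half is correct and standard: dominate $u$ by the strictly positive $w=u+w_0\in L_1(X)$ and use that $D$ is independent of the choice of positive function. The conservative half, however, has a genuine gap exactly at the step you yourself call ``the crux''. The Chacon--Ornstein theorem gives a finite, invariantly measurable limit $h$ of $\sum_{k\le n}\PP^k u\big/\sum_{k\le n}\PP^k w$ on $C$, and on $\{h>0\}$ the potential of $u$ is indeed infinite; but on $\{h=0\}$ the theorem only says the potential of $u$ grows more slowly than that of $w$, which is perfectly consistent with $0<\sum_k\PP^k u<\infty$ there. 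Your assertion that ``the ratio limit \dots vanishes exactly on an invariant set on which $u\cdot 1_C$, and hence its potential, is zero'' is precisely the missing content: to conclude $u=0$ a.e.\ on $\{h=0\}$ one needs the Neveu--Chacon identification of the limit (something of the form $\int_A u\,dm=\int_A h\,w\,dm$ for invariant $A$), which you do not establish and which is at least as deep as the lemma being proved. The first-entrance (``entrance density'') treatment of the mass of $u\cdot 1_D$ leaking into $C$ is likewise only asserted, and the intermediate claim ``$P(x,C)=1$ for a.e.\ $x\in C$'' additionally uses $U1=1$ on $C$, a separate fact for sub-Markovian operators.

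There is a far shorter route that uses only Lemma~\ref{lem:FromFoguelBook}, which the paper already quotes, and which is essentially Foguel's own argument. With $g:=\sum_{k\ge 0}\PP^k u$ one has, for every $n$, $\PP^{n+1}g=\PP^n g-\PP^n u\le\PP^n g$ and $\PP^n g\le g$ pointwise. Applying Lemma~\ref{lem:FromFoguelBook} to the nonnegative function $\PP^n g$ gives $\PP^{n+1}g=\PP^n g$ on $C$, hence $\PP^n u=0$ a.e.\ on $C\cap\{\PP^n g<\infty\}\supseteq C\cap\{g<\infty\}$; summing over $n$ yields $g=0$ a.e.\ on $C\cap\{g<\infty\}$, which is exactly the $\{0,\infty\}$ dichotomy on $C$ (the only technical point is that the quoted lemma must be read for $[0,\infty]$-valued functions, as it is in Foguel's book). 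This avoids Chacon--Ornstein, the invariance of $C$, the restriction of $\PP$ to $C$, and the entrance decomposition entirely. I would either adopt this elementary argument or simply retain the citation to Foguel, rather than attempt the ratio-ergodic route.
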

}
The operator $\PP$ can be extended to the set of all non-negative measurable functions as
$$\big (\PP f\big )(x)=\lim_{k \to \infty}(\PP f_k)(x),$$
where $\{f_k\}\subset L_1(X)$ is any sequence of non-negative measurable functions  pointwise converging monotonically to $f$ almost everywhere \cite[p.4]{Foguel1969TheProcesses}. Note that $\PP f$ may be equal to infinity on a set of positive measure. 
\begin{lemma}[{\citet[p.16]{Foguel1969TheProcesses}}]
\label{lem:FromFoguelBook}
Let $u$ be a real, nonnegative, measurable  function on $X$ such that $\PP u\leq u$ on $C$. Then $\PP u=u$ on $C$.
\end{lemma}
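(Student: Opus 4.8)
The plan is to argue by contradiction, playing the potential--theoretic dichotomy of Lemma~\ref{lem:FromFoguelBookp11} --- on $C$ a nonnegative potential $\sum_{k\ge0}\PP^kv$ takes only the values $0$ and $\infty$ --- against a pointwise bound by the superharmonic function $u$.

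The first step is a reduction to a \emph{global} inequality. Using the classical fact that the conservative part $C$ is absorbing, i.e. that $\PP$ maps nonnegative functions vanishing on $D$ to nonnegative functions vanishing on $D$ (equivalently $P(x,C)=1$ for a.e. $x\in C$; a short proof by monotone approximation can be inserted, or one cites \citep{Foguel1969TheProcesses}), the function $\bar u:=u\,1_C$ satisfies $\PP\bar u\le\bar u$ on all of $X$: on $C$ this is $\PP\bar u\le\PP u\le u=\bar u$, and on $D$ both sides vanish since $\PP\bar u$ is supported on $C$. As $\bar u=u$ on $C$ and $\PP\bar u\le\PP u\le u$ there, it suffices to prove $\PP\bar u=\bar u$ on $C$; so from now on I assume $\PP u\le u$ everywhere, which also gives the nonincreasing chain $u\ge\PP u\ge\PP^2u\ge\cdots\ge0$.

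Now assume, for contradiction, that $m(\{x\in C:\PP u(x)<u(x)\})>0$. If this set meets $\{u=\infty\}$ in positive measure, choose $A\subseteq\{x\in C:u(x)=\infty,\ \PP u(x)<\infty\}$ with $0<m(A)<\infty$; from $n\,1_A\le u$ and positivity one gets $n\,\PP^k1_A\le\PP^ku\le\PP u<\infty$ on $A$ for all $n$ and all $k\ge1$, hence $\PP^k1_A=0$ on $A$ for $k\ge1$ and $\sum_{k\ge0}\PP^k1_A=1$ on $A$ --- impossible, since Lemma~\ref{lem:FromFoguelBookp11} forces this sum into $\{0,\infty\}$ on $C\supseteq A$. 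Otherwise the set meets $\{u<\infty\}$ in positive measure, and I pick $\varepsilon>0$ and $B_0\subseteq\{x\in C:u(x)<\infty,\ u(x)-\PP u(x)\ge\varepsilon\}$ with $0<m(B_0)<\infty$, and put $v:=\varepsilon\,1_{B_0}\in L_1(X)$. By construction $v\le u-\PP u$ everywhere (on $B_0$ this reads $\varepsilon\le u-\PP u$, elsewhere $v=0$), i.e. $v+\PP u\le u$; applying $\PP$ and iterating yields $\sum_{k=0}^{n}\PP^kv+\PP^{n+1}u\le u$ for every $n$, hence $\sum_{k\ge0}\PP^kv\le u$, which is finite a.e. on $B_0$. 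But Lemma~\ref{lem:FromFoguelBookp11} makes this potential $0$ or $\infty$ on $C$, and its vanishing would force (the $k=0$ term) $v=0$; so it equals $\infty$ on $B_0$, a contradiction. Either way, $\PP u=u$ on $C$.

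I expect the crux to be the reduction step rather than the potential--theoretic core: one must justify that $C$ is absorbing within the present framework and carefully track $\pm\infty$ in the identities above --- in particular that $\PP(u\,1_C)$ stays supported on $C$ for unbounded $u$, which follows from a monotone limit $\PP\big((u\wedge n w)1_C\big)\uparrow\PP(u\,1_C)$ with $w\in L_1(X)$, $w>0$. Once that reduction is in place, the rest is the short comparison with Lemma~\ref{lem:FromFoguelBookp11} outlined above.
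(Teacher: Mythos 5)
The paper does not prove this lemma at all---it is quoted from \citet[p.16]{Foguel1969TheProcesses}---so there is no in-paper argument to measure you against; what you have written is a correct reconstruction of the classical potential-theoretic proof. Both branches of your case analysis are sound: once the inequality $\PP u\le u$ holds globally, the telescoping bound $\sum_{k=0}^{n}\PP^{k}v+\PP^{n+1}u\le u$ with $v=\varepsilon 1_{B_0}$ traps the potential $\sum_{k}\PP^{k}v$ between the positive value $\varepsilon$ and the finite bound $u$ on a positive-measure subset of $C$, which is incompatible with the $\{0,\infty\}$ dichotomy of Lemma~\ref{lem:FromFoguelBookp11}; the $\{u=\infty\}$ branch is killed by the same dichotomy applied to $1_{A}$. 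You have also correctly located the only imported ingredient: the reduction to a global inequality needs the fact that the conservative part is absorbing, i.e.\ that $\PP(1_{C}f)$ vanishes on $D$ for $f\ge0$ (equivalently $U1_{D}=0$ a.e.\ on $C$). This is not stated anywhere in the paper and does not follow from Lemma~\ref{lem:FromFoguelBookp11} alone, so in a final version it must either be proved or cited explicitly (it is standard Chapter~II material in \citet{Foguel1969TheProcesses}); your monotone-approximation remark correctly lifts it from $L_{1}$ functions to the possibly non-integrable $u1_{C}$. One cosmetic point: since the statement takes $u$ real-valued, your first case is vacuous as written, though keeping it makes the argument cover the extended-real-valued subinvariant functions that the extension of $\PP$ to nonnegative measurable functions naturally produces, which is the generality in which Foguel actually works.
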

A measurable function $f:X\to X$ is called  
\begin{itemize}
    \item invariant  if $\PP f=f$
    \item subinvariant  if $\PP f\leq f$
    \item properly subinvariant  if $\PP f<f$ 
    \item a density if $f\geq 0$ and $\|f\|=1$
\end{itemize} 
In the following, we introduce the so-called sweeping property of Markov operators and some other definitions from \citep{Komorowski1989AsymptoticOperators}:
\begin{definition}[Sweeping operator]
A Markov operator $\PP $ is called sweeping (with respect to a familly $\mathcal A\subset\Sigma$) if 
\begin{equation}
    \label{eq:sweeping}
    \lim_{n\to \infty}\int_A \big ( \PP ^n f\big ) \, dm=0, \quad \forall  A\in\mathcal A\text{ and } f\in L_1(X).
\end{equation}
In particular, if $X\subset\mathbb R^N$ or $\mathbb C^N$ and the family $\mathcal A$ consists of countably many open sets $A_n, \ n=1,2,\dots$, then we say that $\PP $ is sweeping to 
\begin{equation}
\label{def:Lambda}
\Lambda:=\bigcap_{A\in\mathcal A}A^c
\end{equation}
if $\Lambda$ is non-empty; otherwise, we say that $\PP $ is sweeping to infinity\footnote{Here, we consider the standard metric in $\mathbb R^n$ or $\mathbb C^n$ and hence this argument and the definition of $\Lambda$ is not up to measure zero sets.}.
\end{definition}
\begin{definition}[Admissible family]
A family $\mathcal A\subset\Sigma$ is called admissible if 
\begin{itemize}
    \item $m(A)<\infty$ for $A\in\mathcal A$,
    \item $A_1,A_2\in\mathcal A$ implies $A_1\cup A_2\in \mathcal A$,
    \item {there exists a countable subfamily  $ \{A_n\}\subset \mathcal A$ such that $\bigcup_nA_n=X$.\footnote{We take this equality in the same form as in the literature where equality holds again up to measure zero sets. Hence, this equality does not imply that $\Lambda$ defined in \eqref{def:Lambda} is empty but it implies that $\Lambda$ has measure zero.}}   
\end{itemize}
\end{definition}
\begin{definition}[Locally integrable function]
   A measurable function $f:X\to \R_+$ is called locally integrable with respect to $\mathcal A$ if
$$\int_A f dm<\infty, \quad \text{for all }   A\in\mathcal A.$$ 
\end{definition}

\subsection{Main Result: Dual Lyapunov Function for Sweeping of Markov Processes}
We now show that existence of a properly subinvariant, positive function implies sweeping of the Markov process.  
The following directly follows from Lemma~\ref{lem:FromFoguelBook}:
\begin{lemma}
\label{lem:X=D}
If a properly subinvariant, positive function $u$ exists then $\PP $ is dissipative.
\end{lemma}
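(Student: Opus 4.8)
The plan is a short proof by contradiction built directly on the Hopf decomposition and Lemma~\ref{lem:FromFoguelBook}. First I would observe that since $u$ is positive it is in particular nonnegative and measurable, so $\PP u$ is well defined via the monotone extension of $\PP$ to nonnegative measurable functions recalled above, and the strict inequality $\PP u < u$, which by hypothesis holds almost everywhere on $X$, holds in particular on the conservative part $C$. Consequently $\PP u \le u$ on $C$, so the hypothesis of Lemma~\ref{lem:FromFoguelBook} is met.

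Applying Lemma~\ref{lem:FromFoguelBook} then yields $\PP u = u$ on $C$. Comparing this equality with the strict inequality $\PP u < u$ on $C$ forces $m(C)=0$, since on no set of positive measure can one simultaneously have $(\PP u)(x)=u(x)$ and $(\PP u)(x)<u(x)$. Hence $X = D$ up to a set of measure zero, which is exactly the definition of $\PP$ being dissipative in the sense of Definition~\ref{def:Hopf}.

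I do not expect any genuine obstacle here; this is the ``one-line consequence'' the surrounding text advertises, and the write-up is essentially the two steps above. The only points worth stating carefully are (i) that $\PP u$ is meaningful and that the inequality on $C$ is inherited from the global one, both immediate from nonnegativity, and (ii) that it is precisely the \emph{strict} (proper) subinvariance, rather than mere subinvariance, that drives the contradiction: a merely subinvariant $u$ is perfectly consistent with $\PP u = u$ on a conservative part of positive measure — indeed Lemma~\ref{lem:FromFoguelBook} says any subinvariant function is automatically invariant on $C$ — so the argument relies essentially on the strictness in the definition of ``properly subinvariant''.
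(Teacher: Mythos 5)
Your proof is correct and follows exactly the paper's own argument: apply Lemma~\ref{lem:FromFoguelBook} to get $\PP u = u$ on $C$, then use the strict inequality $\PP u < u$ almost everywhere to force $m(C)=0$, i.e.\ $X=D$ up to a null set. Your added remarks on why strictness is essential and why $\PP u$ is well defined are accurate but not needed beyond what the paper states.
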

\begin{proof}
Recall that all the results in this section hold up to zero measure sets. Lemma~\ref{lem:FromFoguelBook} implies that $\PP u=u$ on $C$. Since $\PP u<u$ (almost everywhere), this implies $C$ is a zero measure set. In other words, $X=D$ (up to a zero measure set).
\end{proof}
\begin{definition}[Smoothing operator]
    A Markov operator $\PP $ is called smoothing (with respect to $\mathcal A$) if for any given $A\in\mathcal A$, $f\in L_1(X)$ and $\epsilon>0$ there is $\delta>0$ such that 
\begin{equation}
    \label{eq:smoothing}
    E\in\Sigma,\ E\subset A,\ m(E)\leq \delta \implies \int_E \big ( \PP ^n f\big ) \, dm\leq \epsilon, \quad \text{for all  } n=1,2,\dots
\end{equation} 
\end{definition}
The following is proven in \citep{Komorowski1989AsymptoticOperators}:
\begin{lemma}
\label{lem:smoothing}
    If a subinvariant, nonnegative function exists then $\PP $ is smoothing.
\end{lemma}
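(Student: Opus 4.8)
The plan is to bound the iterates $\PP^n f$ uniformly in $n$ by splitting an arbitrary $f\in L_1(X)$ into a piece that is dominated pointwise by a constant multiple of $u$ — a domination that survives every application of $\PP$ — plus an $L_1$-small remainder; the first piece is then controlled by absolute continuity of the measure $u\,dm$, and the second by the contraction property of $\PP$. Two preliminary reductions come first: since $\PP$ is positive it is monotone, so $\PP^n u\le u$ for all $n\ge 0$ follows by induction (reading $\PP$ as its extension to nonnegative measurable functions, recalled above, so this is meaningful even when $u\notin L_1(X)$); and it is enough to prove the estimate for $f\ge 0$, because $\int_E\PP^n f\,dm\le\int_E\PP^n f^+\,dm$ with $f^+\in L_1(X)$ and $f^+\ge 0$.

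Then I would fix $A\in\mathcal A$, $0\le f\in L_1(X)$ and $\epsilon>0$. Since $u$ is locally integrable with respect to $\mathcal A$ we have $\int_A u\,dm<\infty$, and since $u>0$ almost everywhere (the case relevant to the dual Lyapunov function of the main result) the pointwise minima $g_N:=\min\{f,\,Nu\}$ increase to $f$ as $N\to\infty$; by dominated convergence I would choose $N$ so that $h:=f-g_N\ge 0$ satisfies $\|h\|\le\epsilon/2$, and write $g:=g_N\le Nu$. For the remainder, contractivity gives $\int_E\PP^n h\,dm\le\|\PP^n h\|\le\|h\|\le\epsilon/2$ for every $n$ and every measurable $E$, with no smallness of $m(E)$ needed. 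For the dominated piece, monotonicity together with $\PP^n u\le u$ gives $\PP^n g\le\PP^n(Nu)=N\,\PP^n u\le Nu$, hence $\int_E\PP^n g\,dm\le N\int_E u\,dm$. The set function $E\mapsto\int_E u\,dm$ is a finite measure on $A$ that is absolutely continuous with respect to $m$, so the $\epsilon$--$\delta$ characterization of absolute continuity supplies $\delta>0$ such that $E\subset A$ and $m(E)\le\delta$ force $\int_E u\,dm\le\epsilon/(2N)$, whence $\int_E\PP^n g\,dm\le\epsilon/2$ for all $n$. Adding the two estimates and using linearity of $\PP^n$ gives $\int_E\PP^n f\,dm\le\epsilon$ for all $n$, which is exactly the smoothing property for $A$ with this $\delta$.

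I expect the main difficulty to be bookkeeping rather than analysis. One has to be careful that the comparison $\PP^n g\le Nu$ persists when $\PP^n$ is read as the extended operator acting on the possibly non-integrable $u$, and that the decomposition $f=g+h$ with $\|h\|$ small is genuinely available — this is precisely where one uses that $u$ is strictly positive almost everywhere (so that $Nu\to\infty$ a.e.) and locally integrable over the members of $\mathcal A$, which are exactly the features of the dual Lyapunov function in our setting, so nothing relevant to us is lost. Beyond the elementary $\epsilon$--$\delta$ fact about finite absolutely continuous measures there is no substantive obstacle, consistent with the statement being quotable from \citep{Komorowski1989AsymptoticOperators}.
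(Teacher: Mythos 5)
Your proof is correct and is essentially the standard argument behind the result the paper merely cites (Corollary~2.2 of Komorowski--Tyrcha): truncate $f$ at $Nu$, propagate the domination $\PP^n(Nu)\le Nu$ by positivity/monotonicity of the (extended) operator, control the truncated piece via the $\epsilon$--$\delta$ absolute continuity of the finite measure $E\mapsto\int_E u\,dm$ on $A$, and kill the remainder by contractivity. You are also right to flag that the lemma as literally stated needs the strict positivity and local integrability of $u$ that are implicit from the hypotheses of Theorem~\ref{th:sweeping} — with merely ``nonnegative'' (e.g.\ $u\equiv 0$) the claim would fail — and your argument uses exactly those two properties and nothing more.
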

\begin{proof}
    See the proof of \cite[Corollary~2.2]{Komorowski1989AsymptoticOperators}
\end{proof}

A version of the so-called Foguel alternative given in \cite[Theorem~3.1]{Komorowski1989AsymptoticOperators} states that a Markov operator $\PP $ induced by a stochastic kernel is sweeping (with respect to an admissible family $\mathcal A$) if there is no invariant density but there exists a subinvariant, locally integrable (with respect to $\mathcal A$), positive function. To obtain a similar result for arbitrary Markov operators, we use the following relation between sweeping and dissipativeness:
\begin{theorem}[{\citet[Theorem 4.1]{Komorowski1989AsymptoticOperators}}]
    If a Markov operator $\PP $ is dissipative; i.e., $X=D$, and smoothing with respect to an admissible family $\mathcal A\subset\Sigma$ then $\PP $ is sweeping with respect to $\mathcal A$.
    \label{th:smoothing}
\end{theorem}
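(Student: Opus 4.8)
The plan is to combine two mechanisms: dissipativity forces $\PP^n u\to 0$ pointwise almost everywhere for a strictly positive $u\in L_1(X)$, while the smoothing property upgrades this pointwise decay to $L_1$ decay on each set of the admissible family, via Egorov's theorem. Fix once and for all a function $u\in L_1(X)$ with $u>0$, which exists because $m$ is $\sigma$-finite. Since $X=D$, Lemma~\ref{lem:FromFoguelBookp11} gives $\sum_{k=0}^{\infty}(\PP^k u)(x)<\infty$ for almost every $x$, whence the general term of this convergent series satisfies $(\PP^n u)(x)\to 0$ for almost every $x\in X$.

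The key step is then to show $\int_A \PP^n u\,dm\to 0$ for every $A\in\mathcal A$. Fix such an $A$ and $\epsilon>0$. Applying the smoothing hypothesis with $f=u$ yields a $\delta>0$ such that $\int_E \PP^n u\,dm\le\epsilon$ for all $n$ whenever $E\subset A$ and $m(E)\le\delta$. Since $m(A)<\infty$ and $\PP^n u\to 0$ almost everywhere on $A$, Egorov's theorem produces a set $E\subset A$ with $m(E)\le\delta$ such that $\PP^n u\to 0$ uniformly on $A\setminus E$; uniform convergence on the finite-measure set $A\setminus E$ forces $\int_{A\setminus E}\PP^n u\,dm\to 0$. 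Splitting $\int_A=\int_{A\setminus E}+\int_E$ and letting $n\to\infty$ gives $\limsup_{n\to\infty}\int_A \PP^n u\,dm\le\epsilon$, and since $\epsilon$ was arbitrary the claim follows.

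It remains to pass from the single function $u$ to an arbitrary $f\in L_1(X)$ by a closed-subspace and truncation argument. For fixed $A\in\mathcal A$ consider $\mathcal S_A=\{f\in L_1(X):\int_A|\PP^n f|\,dm\to 0\}$. This is a linear subspace, and it is closed because $\PP$ is an $L_1$-contraction, so $\int_A|\PP^n f-\PP^n g|\,dm\le\|\PP^n(f-g)\|\le\|f-g\|$ and hence $\limsup_n\int_A|\PP^n f|\,dm\le\|f-g\|$ whenever $g\in\mathcal S_A$. If $0\le f\le cu$ for some constant $c$, then $0\le\PP^n f\le c\,\PP^n u$, so $f\in\mathcal S_A$ by the previous paragraph; splitting into positive and negative parts, every $f$ with $|f|\le cu$ lies in $\mathcal S_A$. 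Finally, for any $g\in L_1(X)$ the truncations $g\,1_{\{|g|\le cu\}}$ belong to $\mathcal S_A$ and, since $u>0$ almost everywhere, converge to $g$ in $L_1$ as $c\to\infty$ by dominated convergence; thus $\mathcal S_A=L_1(X)$. As $A\in\mathcal A$ was arbitrary and $|\int_A\PP^n f\,dm|\le\int_A|\PP^n f|\,dm$, this is precisely the sweeping property~\eqref{eq:sweeping}.

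The main obstacle is the second step: mass of $\PP^n u$ could in principle concentrate on ever smaller subsets of $A$ without the integral over $A$ vanishing, and it is exactly the \emph{smoothing} hypothesis that rules this out; the delicate point is coordinating the $\delta$ coming from smoothing with the exceptional set produced by Egorov's theorem. The Hopf-decomposition input (via Lemma~\ref{lem:FromFoguelBookp11}) and the closed-subspace/truncation reduction are routine.
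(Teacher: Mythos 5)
Your proof is correct. The paper gives no proof of this statement at all --- it is imported verbatim as Theorem~4.1 of \citet{Komorowski1989AsymptoticOperators} --- and your argument (dissipativity yields $\sum_k\PP^k u<\infty$ a.e.\ hence $\PP^n u\to 0$ a.e.\ for a strictly positive $u\in L_1$; Egorov's theorem coordinated with the smoothing $\delta$ upgrades this to $\int_A\PP^n u\,dm\to 0$; a closed-subspace, positivity and truncation argument extends from $u$ to all of $L_1(X)$) is essentially the standard proof found in that reference, with the quantifiers handled in the right order. No gaps.
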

Based on the above definitions and results, we obtain the following characterizations of sweeping via properly subinvariant functions:
\begin{theorem}
    \label{th:sweeping}
  Let $(X,\Sigma,m)$ be a $\sigma$-finite measure space. A Markov operator $\PP :X\to X$ is sweeping with respect to an admissible family $\mathcal A\subset \Sigma$, if there exists a properly subinvariant, locally integrable (with respect to $\mathcal A$),  positive function $u$.   
\end{theorem}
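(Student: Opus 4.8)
The plan is to assemble the statement directly from the three results established just above, all applied to the single given function $u$. The only thing one has to notice at the outset is that a \emph{properly} subinvariant function ($\PP u<u$) is in particular subinvariant ($\PP u\leq u$), and that a positive function is in particular nonnegative; hence $u$ simultaneously meets the hypotheses of Lemma~\ref{lem:X=D} and of Lemma~\ref{lem:smoothing}, and — via its local integrability with respect to $\mathcal A$ — it will produce the smoothing estimate relative to the family $\mathcal A$.

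First I would invoke Lemma~\ref{lem:X=D}: since $u$ is properly subinvariant and positive, $\PP$ is dissipative, i.e.\ $X=D$ up to an $m$-null set. (Recall the mechanism: $u$ is nonnegative with $\PP u\leq u$, so Lemma~\ref{lem:FromFoguelBook} forces $\PP u=u$ on $C$, which contradicts $\PP u<u$ unless $m(C)=0$.) Next I would invoke Lemma~\ref{lem:smoothing}: since $u$ is subinvariant and nonnegative, $\PP$ is smoothing. Here I would point out explicitly that the hypothesis $\int_A u\,dm<\infty$ for every $A\in\mathcal A$ is exactly what pins the smoothing property to the family $\mathcal A$ — it is what provides, for each $A\in\mathcal A$, a bound on $\int_E \PP^n f\,dm$ that is uniform in $n$ when $E\subset A$ and $m(E)$ is small — this being the content of the proof of \cite[Corollary~2.2]{Komorowski1989AsymptoticOperators}. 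Finally, with $\PP$ shown to be both dissipative and smoothing with respect to the admissible family $\mathcal A$, Theorem~\ref{th:smoothing} gives immediately that $\PP$ is sweeping with respect to $\mathcal A$, which is the assertion.

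I do not expect a genuine technical obstacle once the preliminary lemmas are granted; the proof is a short chain of implications. The one point that needs care — and worth a sentence of commentary in the write-up — is bookkeeping of hypotheses: verifying that the \emph{same} $u$ is admissible for Lemma~\ref{lem:X=D}, for Lemma~\ref{lem:smoothing}, and hence (through their conclusions) for Theorem~\ref{th:smoothing}; in particular that positivity of $u$ is what lets it serve as the strictly positive reference function in the Hopf decomposition of Definition~\ref{def:Hopf}, while local integrability (rather than global $L_1$-membership, which $u$ need not have since under dissipativeness the relevant subinvariant functions are typically non-integrable) is precisely the hypothesis that makes smoothing — and therefore sweeping — take place with respect to $\mathcal A$ rather than in some weaker sense.
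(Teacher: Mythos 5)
Your proposal is correct and follows exactly the paper's own argument: Lemma~\ref{lem:X=D} gives dissipativity, Lemma~\ref{lem:smoothing} gives the smoothing property, and Theorem~\ref{th:smoothing} then yields sweeping. Your added remark that local integrability of $u$ with respect to $\mathcal A$ is what ties the smoothing estimate to that family is a sensible point of bookkeeping (the paper's terse statement of Lemma~\ref{lem:smoothing} leaves it implicit, deferring to the proof of the cited corollary), but it does not change the route.
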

\begin{proof}
By Lemma~\ref{lem:X=D}, {$\mathcal P$} is dissipative, and by Lemma~\ref{lem:smoothing}, it is also smoothing. Hence Theorem~\ref{th:smoothing} implies the result.
\end{proof}

Let us assume that $X=\mathbb R^N$ or $\mathbb C^N$, $\|\PP f\|=\|f\|$ for all $f\in L_1(X)$ and consider the implication of sweeping when  $\left(\bigcup_{A\in \mathcal A}A\right)^\text c$ is a singleton, say $\{\bar \phi\}$. For a given $f\in L_1(X)$ with $\|f\| = 1$, define probability measures $\mu_n(V):=\left(\int_V\PP ^nf\ dm\right)$. If $\PP $ is sweeping with respect to $\mathcal A$, then $\lim_{n\to\infty}\mu_n(A)=\lim_{n\to\infty}\int_A\PP^n f \ dm=0$
for any $A\in\mathcal A$. Hence, for a bounded continuous function $g$, we have $\int gd\mu_n\to g(\bar \phi)$. 
In other words, $\mu_n$ converges weakly to the Dirac measure $\delta_{\bar x}$. Let $\Phi_n$ be a random variable taking values on $X$ with respective distributions $\PP ^nf$. Then, by \cite[Remark 5.2.b]{Cinlar2011ProbabilityStochastics}, we can conclude that $\Phi_n$ converges to $\bar\Phi=\bar \phi$ in probability:

\begin{corollary}
\label{cor:convergenceprobability}
Let $(X=\mathbb R^N \text{ or }\mathbb C^N,\Sigma,m,\PP )$ be a Markov process, $\|\PP f\|=\|f\|$ for all $f\in L_1(X)$ and $\Phi_0$ be a random variable defined on $X$ and taking values on $X$ with a distribution that is absolutely continuous with respect to $m$ with Radon-Nikodym derivative being $\rho_0\in L_1(X)$. Let $\mathcal A\subset \Sigma$ be an admissible family such that $\bigcap_{\{A\in\mathcal A\}}A^\text c$ is a singleton, say $\{\bar\phi\}$. Then, the stochastic process $\{\Phi_n\}$ with a sequence of distributions $\{\PP^n\rho_0\}$ converges in probability to the random variable $\bar\Phi\equiv \bar\phi$ if there exists a positive, properly subinvariant, locally integrable (with respect to  $\mathcal A$) measurable function $u$. 
\end{corollary}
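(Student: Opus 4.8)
The plan is to obtain the statement as a direct consequence of Theorem~\ref{th:sweeping} together with the weak-convergence argument outlined just before the corollary. First I would verify the hypotheses of Theorem~\ref{th:sweeping}: the state space $(X,\Sigma,m)$ with $X=\R^N$ or $\mathbb C^N$ and $m$ Lebesgue is $\sigma$-finite, and by assumption a positive, properly subinvariant, locally integrable (with respect to $\mathcal A$) function $u$ exists; hence $\PP$ is sweeping with respect to the admissible family $\mathcal A$, i.e. $\int_A \PP^n f\, dm\to 0$ for every $A\in\mathcal A$ and every $f\in L_1(X)$.

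Next, since $\Phi_0$ has a distribution that is absolutely continuous with density $\rho_0$, we have $\rho_0\geq 0$ and $\|\rho_0\|=1$; positivity of $\PP$ and the assumption $\|\PP f\|=\|f\|$ give $\PP^n\rho_0\geq 0$ with $\|\PP^n\rho_0\|=1$, so each $\PP^n\rho_0$ is a genuine element of $L_1(X)$ (finite $m$-a.e.) and $\mu_n(V):=\int_V\PP^n\rho_0\, dm$ is a probability measure on $(X,\Sigma)$ --- precisely the law of $\Phi_n$. Sweeping then reads $\mu_n(A)\to 0$ for every $A\in\mathcal A$.

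The core of the proof is to upgrade this into weak convergence $\mu_n\Rightarrow\delta_{\bar\phi}$. Fix an open neighbourhood $U$ of $\bar\phi$. Since $\bigcap_{A\in\mathcal A}A^c=\{\bar\phi\}$, we have $U^c\subseteq X\setminus\{\bar\phi\}=\bigcup_{A\in\mathcal A}A$; covering the closed set $U^c$ by members of $\mathcal A$ and using that $\mathcal A$ is closed under finite unions --- so that, once $U^c$ is covered by finitely many of them, a single $A_U\in\mathcal A$ contains $U^c$ --- yields $\mu_n(U^c)\leq\mu_n(A_U)\to 0$. Given a bounded continuous $g$ and $\epsilon>0$, choose $U$ so small that $|g(x)-g(\bar\phi)|<\epsilon$ on $U$; then
\[
\Bigl|\int g\, d\mu_n-g(\bar\phi)\Bigr|\leq\int_U|g-g(\bar\phi)|\, d\mu_n+2\|g\|_\infty\,\mu_n(U^c)\leq\epsilon+2\|g\|_\infty\,\mu_n(U^c),
\]
and letting $n\to\infty$ and then $\epsilon\to 0$ gives $\int g\, d\mu_n\to g(\bar\phi)$, i.e. $\mu_n\Rightarrow\delta_{\bar\phi}$; equivalently $\mu_n(\{x:|x-\bar\phi|>\epsilon\})\to 0$ for every $\epsilon>0$. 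Finally, since the laws of $\Phi_n$ converge weakly to the Dirac mass at $\bar\phi$, \cite[Remark~5.2.b]{Cinlar2011ProbabilityStochastics} gives that $\Phi_n$ converges in probability to the a.s.\ constant random variable $\bar\Phi\equiv\bar\phi$, which is the assertion.

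The step I expect to be the main obstacle is precisely the passage from ``$\mu_n(A)\to 0$ for each $A\in\mathcal A$'' to genuine weak convergence to $\delta_{\bar\phi}$: it requires ruling out escape of mass, that is, absorbing a closed set $U^c$ avoiding $\bar\phi$ into a single member of $\mathcal A$. This is where admissibility of $\mathcal A$ (its countable cover of $X$) is used, and it is automatic when the relevant closed sets are compact --- e.g.\ when the state space is compact, as in the application of the next section --- whereas on an unbounded $X$ one must read the hypothesis as also providing such a finite subcover (or add a tightness condition on $\{\mu_n\}$). The remaining ingredients --- a.e.\ finiteness of $\PP^n\rho_0$, the probability-measure property, and the implication ``weak convergence to a constant $\Rightarrow$ convergence in probability'' --- are routine.
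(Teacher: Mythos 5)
Your proposal follows essentially the same route as the paper: the paper does not give a separate proof environment for this corollary, but derives it in the paragraph immediately preceding it by invoking Theorem~\ref{th:sweeping} to get sweeping, asserting weak convergence of the laws $\mu_n$ to $\delta_{\bar\phi}$, and then citing \citet[Remark 5.2.b]{Cinlar2011ProbabilityStochastics} to pass to convergence in probability. Where you go beyond the paper is the middle step: the paper jumps directly from ``$\mu_n(A)\to 0$ for all $A\in\mathcal A$'' to ``$\int g\,d\mu_n\to g(\bar\phi)$,'' whereas you correctly isolate the needed ingredient, namely that the closed set $U^c$ (for $U$ a neighbourhood of $\bar\phi$) must be absorbed into a single member of $\mathcal A$, which requires a finite subcover and hence compactness of $U^c$ (or a tightness assumption on $\{\mu_n\}$). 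This is a genuine point: on unbounded $X=\R^N$ mass can escape to infinity while still sweeping out of every $A\in\mathcal A$ (the paper's own cell-cycle example sweeps to infinity), so the corollary as stated implicitly relies on exactly the condition you flag; in the quantum application the state space is the compact sphere $\mathcal S_N$, so the gap is harmless there. Your write-up is correct and, on this step, more careful than the paper's.
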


The following example illustrates an application of Theorem~\ref{th:sweeping} to a stochastic process modeling the cell size dynamics in a cell cycle process.


\begin{example}
We consider the dynamics of the cell size distribution proposed in \citep{Tyson1986MathematicalCycle} as
\begin{equation}
 \rho_{n+1}(x)=\PP \rho_n(x):=\int_\sigma^\infty K(x,y) \rho_n(y)\ dy,
\end{equation}
where $\rho_n$ is the distribution of cell sizes at birth for the $n$th generation cells, $\sigma$ is the minimum size at birth and $K(x,y)$ is a stochastic kernel given by
\begin{equation}
K(x,y)=\begin{dcases}
        \frac{\alpha}{\sigma} \left(\frac{x}{\sigma}\right)^{-1-\alpha} & , \sigma\leq y < 1\\
        \frac{\alpha}{\sigma} \left(\frac{x}{\sigma}\right)^{-1-\alpha}y^\alpha & , 1\leq y < \frac{x}{\sigma}\\
        0 & , \frac{x}{\sigma}\leq y.\\
    \end{dcases}
\end{equation}
\end{example}
Here $\alpha$ is a positive parameter. For $\alpha\ln \sigma > -1$, it is known that there is no invariant density \cite[Theorem~4.2]{Tyrcha1988AsymptoticCycle} and that $\bar\rho(x)=x^{-1+\beta}$ is invariant for some $\beta>0$ \citep{Komorowski1989AsymptoticOperators}. Note that $\bar\rho(x)$ is locally integrable with respect to the family of subsets $\{[\sigma,a)\mid a>\sigma\}$ which implies that the system is sweeping to infinity if $\alpha\ln \sigma\geq -1$  \citep{Komorowski1989AsymptoticOperators}. We now show that this can also be proven using directly Theorem~\ref{th:sweeping} without the need to show the absence of an invariant density 
\begin{align}
    &\PP \bar\rho=\PP (x^{-1+\beta})\nonumber\\ &=\frac{\alpha}{(\alpha+\beta)\sigma^{\beta}}\ x^{-1+\beta}+\alpha\frac{\alpha-\sigma^{\beta}(\alpha+\beta)}{\beta(\alpha+\beta)\sigma^{-\alpha}}\ x^{-1-\alpha}\label{eq:cellPerron}
\end{align}
Hence, $\PP (x^{-1+\beta})<x^{-1+\beta}$ if 
\begin{equation}
    \label{eq:cell}
    f(\beta):=\alpha-\sigma^\beta(\alpha+\beta)<0.
\end{equation}
Note that the first term in \eqref{eq:cellPerron} is less than $x^{-1+\beta}$ whereas the second term is negative.
$f(0)=0$ and $f'(0)=-\alpha \ln \sigma -1<0$ imply that for a sufficiently small positive $\beta$, \eqref{eq:cell} is satisfied and  $x^{-1+\beta}$ is properly subinvariant. Hence, Theorem~\ref{th:sweeping} implies that the system is sweeping to infinity.

\subsection{An Extension of Frobenius-Perron Operators}
A special class of Markov operators describing the dynamics of probabilities under a measurable, nonsingular\footnote{S is nonsingular if $m(S^{-1}(A))=0$ for all $A$ with $m(A)=0$}, deterministic map $S:X\to X$ are Frobenius-Perron operators \citep{Lasota1994ChaosNoise}. In this case, $P(x,A)=(U1_A)(x)=1_A(S(x))$ and the Markov operator $\PP $ can be obtained using the Radon-Nikodym derivative of the measure $\hat\mu_\rho(A)=\int_{S^{-1}(A)}\rho\ dm$ with repect to the Lebesgue measure $m$ as
\begin{equation}
\PP \rho=\frac{d\hat\mu_\rho}{dm}.
\end{equation}
Equivalently, $\mathcal P$ satisfies
\begin{align}
    \int_A (\mathcal{P} \rho)(x) m(dx) =  \int_{S^{-1}(A)}  \rho (x) m(dx)
\end{align}
If $S$ is differentiable and invertible then Frobenius-Perron operator has the explicit form \cite[Corollary~3.2.1 and Remark~3.2.4]{Lasota1994ChaosNoise}
\begin{equation}
\label{eq:invertibleS}
    \mathcal P\rho(x)=\rho(S^{-1}(x))\cdot |\det DS^{-1}(x)|,
\end{equation}
where $D$ stands for the Jacobian. 

Let us now consider an extension of the Frobenius-Perron operator to systems where  
the transition probabilities are given via
\begin{equation}
    \label{eq:NewTransitionProbability}  P(x,A)=\sum_{k=1}^Kp_k(x)\cdot 1_A(S_k(x)),
\end{equation}
where $\sum_{k=1}^Kp_k(x)=1$ for all $x\in X$ and $p_k(\cdot)$ is measurable for all $k$. Such processes arise from the iterated function systems where probability of choice of a function depends on the state \citep{Barnsley1988InvariantProbabilities}.
This induces a stochastic process $\{X_n\}$ {with transition probabilities}
\begin{align*}
   \text{Prob} [X_{n+1} \in A \mid  X_n = x ]&=\sum_{k=1}^K\text{Prob} [\kappa_n=k \mid X_n = x ]\cdot\text{Prob} [X_{n+1} \in A \mid \kappa_n= k, X_n = x ]\\
   &= \sum_{k=1}^K p_k(x)\cdot 1_A(S_k(x)), 
\end{align*}
where $\kappa_n$ is a discrete random variable (taking values in $\{1,\dots,K\}$) dependent on $X_n$ via the transition kernel $p_k(x)$.
The corresponding Markov operator $\PP $ satisfies


\begin{align*}
    \int_A (\mathcal{P} \rho)(x) m(dx) &=  \int_X  \rho (x) P(x,A) m(dx) \\
    &= \sum_{k=1}^K\int_{S_k^{-1}(A)} \rho(x)p_k(x)m(dx)=\sum_{k=1}^K\int_{X} (\mathcal P_k(\rho p_k))(x)m(dx),
\end{align*}
where $\mathcal P_k$ is the Frobenius-Perron operator for the deterministic map $\M_k$.
When $S_k$'s are differentiable and invertible, by \eqref{eq:invertibleS}, we get 
\begin{align}
\label{eq:invertibleSK}
    \mathcal P\rho(x)=\sum_{k=1}^Kp_k(S_k^{-1}(x))\rho(S_k^{-1}(x))\cdot |\det DS_k^{-1}(x)|,   
\end{align}

\section{Application To Quantum Systems With Non-Demolition  Measurements}
\label{sec:Quantum}

In this section, we show that Markov operators in the form of \eqref{eq:invertibleSK} arise in the analysis of quantum systems with non-demolition measurements and hence Lyapunov densities can be used to prove stochastic convergence of such systems.  

For a real or complex vector $\phi$, let $\|\phi\|$ denote its 2-norm. The set of all possible states of an $N$ dimensional quantum system is the unit complex sphere  
$$\mathcal{S}_N\subset \mathbb{C}^N,\quad \mathcal{S}_N=\{\phi\in\mathbb C^N:|\|\phi\|=1\}.$$
Consider a set of non-demolition measurement operators 
$\M_k:\mathcal{S}_N\to \mathcal{S}_N$, $k=1,\dots,K$ defined by 
\begin{equation}
\label{eq:NonDemolition}
\M_k(\phi):=\frac{M_k\phi}{\|M_k\phi\|} 
\end{equation}
such that $M_k$'s are positive definite, invertible matrices satisfying
\begin{equation}
\label{eq:completeness}
    \sum_{k=1}^K M_k^*M_k=I. 
\end{equation}
Note that 
\begin{equation}
\label{eq:NonDemolitionInverse}
\M_k^{-1}(\phi)=\frac{M_k^{-1}\phi}{\|M_k^{-1}\phi\|} .
\end{equation}
For a fixed time $n$, we introduce a random variable $\Phi_n$ taking values in $\mathcal{S}_N$ and a random variable $\kappa_n$ taking values in $\{1,\dots,K\}$ representing the outcome of the  measurement at time $n$. The probability of obtaining the outcome $\kappa_n=k$ is independent from $n$ and is given by 

\begin{equation}
p_k(\phi):=\Prob\left(\kappa_n=k|\Phi_n=\phi\right)=\|M_k\phi\|^2. 
\end{equation}
\eqref{eq:completeness} ensures the unity of the sum of probabilities. 
The following discrete-time Markov process arises:
\begin{equation}
\label{eq:Markov}
\Phi_{n+1}=\M_{\kappa_n}(\Phi_n)
\end{equation}

Note that 
$\mathcal{S}_N$
can be identified with the $2N-1$ dimensional real sphere $\mathbb{S}^{2N-1}$ through the map 
$$(x_1+iy_1,\dots, x_N+iy_N)\mapsto (x_1,y_1,\dots, x_N,y_N),\quad \sum_{j=1}^{N}(x_j^2+y_j^2)=1.$$
Let us consider the Riemannian measure on $\mathcal{S}_N$ induced by $\mathbb{S}^{2N-1}$ and let $L^1(\mathcal{S}_N)$ denote the set of Lebesgue integrable (with respect to the Riemannian measure) functions on $\mathcal{S}_N$. 
By \eqref{eq:invertibleSK}, the Frobenius-Perron operator $\PP:L^1(\mathcal{S}_N)\to L^1(\mathcal{S}_N)$ related to the dynamics 
driven by \eqref{eq:Markov} can be obtained as
\begin{align}
\label{eq:PerronOnTildeX}
\PP\rho(\phi)&=
\sum_{k=1}^Kp_k(\M_k^{-1}\phi)\cdot\rho(\M_k^{-1}\phi)\cdot \left|\det\left(D\M_k^{-1}(\phi)\right)\right|\\
&=\sum_{k=1}^K \frac{1}{\|M_k^{-1}\phi\|^2}\cdot\rho(\M_k^{-1}\phi)\cdot \left|\det\left(D\M_k^{-1}(\phi)\right)\right|.
\label{eq:PerronOnTildeX2}
\end{align}


 We first calculate the determinant of the Jacobian of an operator of form \eqref{eq:NonDemolition} on a real sphere:
\begin{lemma}
 \label{lemmahc1}Let 
 $\M:\mathbb{S}^{N-1}\to \mathbb{S}^{N-1}$ be defined by $\M(\phi):=M\phi/\|M\phi\|$, where $\phi\in\mathbb R^N$ with $\|\phi\|=1$ and $M$ is an invertible $N\times N$ real matrix. Then
 \begin{equation*}
 |\det\left(D\M(\phi)\right)|=\frac{|\det(M)|}{\|M\phi\|^N},
 \end{equation*}
where $DS(\phi)$ is the 
Jacobian acting on the tangent space at $\phi$ seen as an element of $\mathbb{S}^{N-1}$. 
 \end{lemma}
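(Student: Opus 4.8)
The plan is to reduce the intrinsic Jacobian computation on the sphere to an ordinary Jacobian determinant on $\R^N$ by passing to the degree-one homogeneous extension of $\M$. Define $\hat\M:\R^N\setminus\{0\}\to\R^N\setminus\{0\}$ by
$$\hat\M(x):=\|x\|\,\M\!\left(\tfrac{x}{\|x\|}\right)=\frac{\|x\|}{\|Mx\|}\,Mx,$$
which is well defined because $M$ is invertible, is homogeneous of degree one, and restricts to $\M$ on $\mathbb S^{N-1}$. The first thing I would establish is the identity
$$|\det(D\M(\phi))|=|\det(D\hat\M(\phi))|,$$
where on the left $D\M(\phi):T_\phi\mathbb S^{N-1}\to T_{\M(\phi)}\mathbb S^{N-1}$ is the intrinsic differential (its determinant taken in orthonormal bases, i.e.\ with respect to the round metric) and on the right $D\hat\M(\phi)$ is the full $N\times N$ Euclidean Jacobian.

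To prove this identity I would use Euler's relation. Since $\hat\M$ is homogeneous of degree one, $D\hat\M(x)\,x=\hat\M(x)$ for every $x\neq 0$; in particular $D\hat\M(\phi)\,\phi=\M(\phi)$. On the other hand, if $v\in T_\phi\mathbb S^{N-1}$ and $\gamma$ is any curve on $\mathbb S^{N-1}$ with $\gamma(0)=\phi$ and $\dot\gamma(0)=v$, then $\|\gamma(t)\|\equiv 1$, so $\hat\M(\gamma(t))=\M(\gamma(t))$ and differentiating at $t=0$ gives $D\hat\M(\phi)v=D\M(\phi)v\in T_{\M(\phi)}\mathbb S^{N-1}$. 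Thus, relative to orthonormal bases adapted to the orthogonal splittings $\R^N=\R\phi\oplus T_\phi\mathbb S^{N-1}$ and $\R^N=\R\,\M(\phi)\oplus T_{\M(\phi)}\mathbb S^{N-1}$, the matrix of $D\hat\M(\phi)$ is block triangular with a $1$ in the radial corner and the matrix of $D\M(\phi)$ in the tangential block; hence the two determinants agree up to sign.

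Finally I would compute $\det(D\hat\M(\phi))$ directly. Writing $\hat\M(x)=g(x)\,Mx$ with $g(x):=\|x\|/\|Mx\|$, the product rule gives $D\hat\M(x)=g(x)\,M+(Mx)\,(\nabla g(x))^{\top}$, a rank-one perturbation of $g(x)M$, so the matrix determinant lemma yields
$$\det(D\hat\M(x))=g(x)^N\det(M)\left(1+\frac{(\nabla g(x))^{\top}x}{g(x)}\right).$$
Since $g$ is homogeneous of degree zero, Euler's relation gives $(\nabla g(x))^{\top}x=0$, hence $\det(D\hat\M(x))=g(x)^N\det(M)$. Evaluating at $x=\phi$, where $g(\phi)=1/\|M\phi\|$, and combining with the reduction step gives $|\det(D\M(\phi))|=|\det(M)|/\|M\phi\|^N$.

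The routine ingredients here are the product-rule computation of $D\hat\M$, the matrix determinant lemma, and the two Euler identities. The step that needs genuine care, and the one I expect to be the main obstacle to writing cleanly, is the reduction $|\det(D\M(\phi))|=|\det(D\hat\M(\phi))|$: one must be explicit about which differential is meant, about the orthogonal decompositions of domain and codomain, and about why homogeneity forces the block-triangular form — this is precisely the place where a careless argument could silently lose a factor of $\|M\phi\|$.
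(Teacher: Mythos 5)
Your proof is correct, but it takes a genuinely different route from the paper's. The paper works directly on the sphere: it differentiates along a path to obtain the explicit formula $DS(\phi)=\|M\phi\|^{-1}\left(\mathrm{Id}_N-S(\phi)S(\phi)^T\right)M$, notes that $DS(\phi)\phi=0$ and that its range is orthogonal to $S(\phi)$, and then introduces an auxiliary orthogonal matrix $U$ with $US(\phi)=\phi$ so that, in a basis adapted to $\phi$ and its orthogonal complement, $\|M\phi\|^{-1}UM$ is block triangular with a $1$ in the radial corner and the matrix of $DS(\phi)$ in the tangential block; the determinant then reads off as $|\det(\|M\phi\|^{-1}M)|$. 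Your homogeneous-extension argument replaces both the explicit formula for $DS(\phi)$ and the matrix $U$: the block structure comes for free from Euler's identity $D\hat S(\phi)\phi=\hat S(\phi)=S(\phi)$ together with the fact that $\hat S$ restricts to $S$ on the sphere, and the full Euclidean Jacobian is evaluated in closed form by the matrix determinant lemma plus Euler's identity for the degree-zero factor $g$. The reduction step you flag as delicate is in fact sound — your adapted matrix is even block diagonal, and passing between orthonormal bases of domain and codomain changes the determinant only by a sign, which is harmless under absolute values. What the paper's computation buys is an explicit expression for the differential itself (the projector form of $DS(\phi)$), which is of some independent interest; what yours buys is a shorter determinant computation that never needs the projector or the orthogonal matrix $U$, and that transfers just as directly to the complex case of the next lemma after the same realification the paper performs there.
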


\begin{proof}

    At the outset, we compute the differential $DF$. 
    Let $\psi: (-\epsilon, \epsilon) \to \mathbb{S}^{N-1}$ be a path such that $\psi(0) = \phi$, and
$\dot \psi (0) = v$. Consequently, 
\begin{align*}
DS(\phi) v &= \left.\frac{d}{dt} \right|_{t=0} S(\psi(t)) = 
\frac{M v}{\|M \phi\|} - \frac{M \phi}{2 \|M \phi\|^3} \left( \phi^T M^{T} M v + v^T M^{T} M \phi \right) \\
&= \|M \phi\|^{-1} \left( M v - S(\phi)^T S(\phi) M  v \right) = \|M \phi\|^{-1} \left(\mathrm{Id}-   S(\phi)^T S(\phi) \right) M  v
\end{align*}
Hence,
\begin{align}
DS(\phi) =  \|M \phi\|^{-1} \left(\mathrm{Id}_N-  S(\phi) S(\phi)^T\right) M.
\label{DifferentialF}
\end{align}
    By direct computation, we conclude that $DF(\phi)\phi=0$; whereas its range is orthogonal to $S(\phi)$ when $\phi\in \mathbb{S}^{N-1}$, since $S(\phi)^T (\mathrm{Id}_N-  S(\phi) S(\phi)^T) = 0$. 

    Define $U \equiv U(\phi)$ to be an orthogonal $N\times N$ matrix which sends $S(\phi)$ into $\phi$, i.e., $US(\phi)=\phi$. If $\{f_j\}_{j=1}^{N-1}$ denotes an orthonormal basis of the subspace orthogonal to $S(\phi)$, then $e_j=Uf_j$ is an orthonormal basis of the subspace orthogonal to $\phi$.  Let us compute the matrix of $UM$ in the basis $\{\phi,e_1,...,e_{N-1}\}$. We have 
    $$a_{00}:=\phi^T\,UM\phi=S(\phi)^TM\phi=\|M\phi\|,$$
    $$a_{j0}:=e_j^T UM\phi=f_j^TM\phi = \|M \phi\| f_j^T S(\phi) = 0,\quad 1\leq j\leq N-1, $$
   and by \eqref{DifferentialF}
    $$a_{jk}=e_j^T UMe_k=f_j^T Me_k =\|M\phi\| \, [DS(\phi)]_{jk},\quad 1\leq j,k\leq N-1.$$ 
In the above, we have used $\left(\mathrm{Id}_N-  \ket{S(\phi)} \bra{S(\phi)}\right) f_j = f_j$.
    
    Thus, in this basis $\{\phi,e_1,...,e_{N-1}\}$, the matrix  $\|M\phi\|^{-1}\, UM$ looks like 
    $$\begin{bmatrix}
1& * & \dots & * \\
0 & [DS(\phi)]_{11} &\dots &[DS(\phi)]_{1,N-1}\\
  . & .&.&. \\
  . & .&.&. \\
  0& [DS(\phi)]_{N-1,1} &\dots &[DS(\phi)]_{N-1,N-1}
    \end{bmatrix}$$
where ``$*$" corresponds to the entries which are irrelevant for the further computation.   
    This implies that the determinant of $DS(\phi)$ 
    equals the determinant of $\|M\phi\|^{-1}\, UM$. Hence,
    $|\det (DS(\phi))| = |\det(\|M\phi\|^{-1}\, M)| = \frac{1}{\|M\phi\|^N} |\det(M)|$.
\end{proof}


The determinant of the Jacobian of an operator of form \eqref{eq:NonDemolition} on a unit complex sphere is as follows:
\begin{lemma}
 \label{lemmahc2}Let 
 $\M:\mathcal{S}_N\to \mathcal{S}_N$ be defined by $\M(\phi):=M\phi/\|M\phi\|$, where $\phi\in\mathbb C^N$ with $\|\phi\|=1$ and $M$ is an invertible matrix with complex entries. Then we have
 \begin{equation*}
 |\det\left(D\M(\phi)\right)|=\frac{|\det(M)|^2}{\|M\phi\|^{2N}},
 \end{equation*}
 where 
 $DS(\phi)$ is the Jacobian  acting on the tangent space at $\phi$ seen as an element of $\mathbb{S}^{2N-1}$.  
 \end{lemma}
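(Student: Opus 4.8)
The plan is to reduce this complex statement to the real one already established in Lemma~\ref{lemmahc1}. Using the coordinate identification $\mathcal{S}_N \cong \mathbb{S}^{2N-1}\subset\mathbb{R}^{2N}$ displayed just above, multiplication by the complex $N\times N$ matrix $M$ on $\mathbb{C}^N$ corresponds to multiplication by a real $2N\times 2N$ matrix $M_{\R}$ on $\mathbb{R}^{2N}$ (the ``realification'' of $M$); explicitly, writing $M=A+iB$ with $A,B$ real one has, up to reordering of coordinates, $M_{\R}=\left(\begin{smallmatrix}A&-B\\ B&A\end{smallmatrix}\right)$. Moreover the Euclidean norm is preserved, $\|M\phi\|=\|M_{\R}\tilde\phi\|$, where $\tilde\phi\in\mathbb{S}^{2N-1}$ denotes the image of $\phi$, since $\sum_j|z_j|^2=\sum_j(x_j^2+y_j^2)$. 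Under this identification the map $\M(\phi)=M\phi/\|M\phi\|$ becomes exactly $\tilde\phi\mapsto M_{\R}\tilde\phi/\|M_{\R}\tilde\phi\|$, i.e.\ a map of $\mathbb{S}^{2N-1}$ of precisely the form treated in Lemma~\ref{lemmahc1} with ambient dimension $2N$ and matrix $M_{\R}$.

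Next I would note that the coordinate identification is a linear isometry of the ambient spaces restricting to a Riemannian isometry $\mathcal{S}_N\to\mathbb{S}^{2N-1}$; since differentiation commutes with this (linear) map, it intertwines $D\M(\phi)$ acting on $T_\phi\mathcal{S}_N$ with the differential of $\tilde\phi\mapsto M_{\R}\tilde\phi/\|M_{\R}\tilde\phi\|$ acting on $T_{\tilde\phi}\mathbb{S}^{2N-1}$, and hence the two Jacobians have the same absolute determinant. Applying Lemma~\ref{lemmahc1} with $N$ replaced by $2N$ and $M$ replaced by $M_{\R}$ then gives
\begin{equation*}
|\det(D\M(\phi))|=\frac{|\det(M_{\R})|}{\|M_{\R}\tilde\phi\|^{2N}}=\frac{|\det(M_{\R})|}{\|M\phi\|^{2N}}.
\end{equation*}
It remains only to identify $\det(M_{\R})$: this is the standard linear-algebra fact $\det_{\R}(M_{\R})=|\det_{\mathbb{C}}(M)|^2$, which I would justify in one line by complexifying and block-diagonalizing, $P^{-1}M_{\R}P=\operatorname{diag}(\bar M,M)$ for a suitable constant invertible $P$ (equivalently, via a Schur triangularization of $M$, so that the eigenvalues of $M_{\R}$ are those of $M$ together with their conjugates), whence $\det(M_{\R})=\overline{\det M}\cdot\det M=|\det M|^2$. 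Substituting this into the displayed equality yields $|\det(D\M(\phi))|=|\det(M)|^2/\|M\phi\|^{2N}$, as claimed.

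The content of the argument is thus entirely carried by Lemma~\ref{lemmahc1} and the elementary determinant identity; the only point requiring a little care — the ``main obstacle'', such as it is — is the bookkeeping in the first paragraph, namely verifying cleanly that passing to real coordinates really does turn $\M$ into a map of the form covered by Lemma~\ref{lemmahc1}, that norms and tangent spaces correspond under the identification, and that the differential of the realified map equals the realification of the differential $D\M(\phi)$.
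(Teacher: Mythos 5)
Your proposal is correct and follows essentially the same route as the paper: both reduce to Lemma~\ref{lemmahc1} by realifying $M$ to the $2N\times 2N$ block matrix $\left(\begin{smallmatrix}M_R&-M_I\\ M_I&M_R\end{smallmatrix}\right)$, checking that norms are preserved, and using $\det(M_{\R})=\det(M)\overline{\det(M)}=|\det(M)|^2$. Your write-up is if anything slightly more explicit than the paper's about why the identification intertwines the differentials, but the substance is identical.
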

 \begin{proof}
     Let $\phi=u+iv\in \mathcal{S}_N$ with $u,v\in \mathbb{R}^N$ such that $\|\phi\|^2=\|u\|^2+\|v\|^2=1$. We write $M = M_R + i M_I$. As a consequence, the real and complex part of $M \psi$ is
     $$
     \tilde M \begin{bmatrix}
         u \\ v
     \end{bmatrix}, \hbox{ with }  \tilde M = \begin{bmatrix}
        M_R & -M_I \\ M_I & M_R
    \end{bmatrix}.
     $$
     
     We observe that $\|M\phi\|=\|\tilde{M}[u^T,v^T]^T\|_{\mathbb{R}^{2N}}$ and compute the determinant of the block matrix $\tilde M$,
     \begin{align*}
         \det(\tilde M) = \det(M_R + i M_I) \det(M_R - i M_I) = \det(M) \overline{\det(M)} = |\det(M)|^2.
     \end{align*}
     Now we can apply Lemma \ref{lemmahc1}, where $M$ is replaced by $\tilde{M}$, and $N$ by $2N$. 
 \end{proof}

We can now obtain the Frobenius-Perron operator of a stochastic quantum system driven by quantum nondemolition measurements in terms of the measurement operators. The following directly follows from \eqref{eq:NonDemolitionInverse}, \eqref{eq:PerronOnTildeX2} and Lemma~\ref{lemmahc2}.
\begin{theorem}
\label{th:QNDPerron}
The Frobenius-Perron operator of
\eqref{eq:Markov} is given by
\end{theorem}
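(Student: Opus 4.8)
The plan is to obtain the operator directly from the already-derived expression \eqref{eq:PerronOnTildeX2} by substituting the Jacobian determinant furnished by Lemma~\ref{lemmahc2}. Indeed, \eqref{eq:PerronOnTildeX}--\eqref{eq:PerronOnTildeX2} are nothing but the specialization of \eqref{eq:invertibleSK} to the maps $\M_k$ of \eqref{eq:NonDemolition}: the choice probabilities are $p_k(\phi)=\|M_k\phi\|^2$, the inverse branches are given by \eqref{eq:NonDemolitionInverse}, and the weight simplifies since $M_k\M_k^{-1}\phi=\phi/\|M_k^{-1}\phi\|$, so that $p_k(\M_k^{-1}\phi)=\|M_k\M_k^{-1}\phi\|^2=\|\phi\|^2/\|M_k^{-1}\phi\|^2=1/\|M_k^{-1}\phi\|^2$ because $\phi\in\mathcal S_N$.

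Next I would apply Lemma~\ref{lemmahc2} to each branch $\M_k^{-1}(\phi)=M_k^{-1}\phi/\|M_k^{-1}\phi\|$, i.e.\ with the invertible complex matrix of the lemma taken to be $M_k^{-1}$ (invertible because $M_k$ is). This gives
\begin{equation*}
\left|\det\!\left(D\M_k^{-1}(\phi)\right)\right|
=\frac{|\det(M_k^{-1})|^{2}}{\|M_k^{-1}\phi\|^{2N}}
=\frac{1}{|\det(M_k)|^{2}\,\|M_k^{-1}\phi\|^{2N}}.
\end{equation*}
Inserting this together with $p_k(\M_k^{-1}\phi)=1/\|M_k^{-1}\phi\|^{2}$ into \eqref{eq:PerronOnTildeX2} and collecting the powers of $\|M_k^{-1}\phi\|$ yields
\begin{equation*}
\PP\rho(\phi)=\sum_{k=1}^{K}\frac{1}{|\det(M_k)|^{2}\,\|M_k^{-1}\phi\|^{2N+2}}\;\rho\!\left(\frac{M_k^{-1}\phi}{\|M_k^{-1}\phi\|}\right),
\end{equation*}
which is the asserted formula.

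There is no real obstacle here beyond bookkeeping; the one point deserving care is that \eqref{eq:invertibleS}--\eqref{eq:invertibleSK}, and hence \eqref{eq:PerronOnTildeX2}, are stated for diffeomorphisms of open subsets of Euclidean space, whereas the state space is the manifold $\mathcal S_N\simeq\mathbb S^{2N-1}$ carrying its Riemannian measure. I would resolve this by observing that the change-of-variables identity behind \eqref{eq:invertibleS} is valid on any Riemannian manifold once $|\det DS^{-1}|$ is read as the Jacobian of $S^{-1}$ with respect to the Riemannian volume form; and this is exactly the quantity produced by Lemma~\ref{lemmahc2}, in which the Jacobian is taken on the tangent space of $\mathbb S^{2N-1}$ with respect to orthonormal bases. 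Once this identification is in place the remaining computation is purely algebraic, as above.
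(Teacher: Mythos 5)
Your proposal is correct and follows essentially the same route as the paper: the paper obtains \eqref{eq:PerronSimple} by substituting the inverse maps \eqref{eq:NonDemolitionInverse} and the Jacobian determinant from Lemma~\ref{lemmahc2} (applied with $M=M_k^{-1}$) into \eqref{eq:PerronOnTildeX2}, exactly as you do. Your added remark on reading $|\det DS^{-1}|$ as the Riemannian Jacobian on $\mathbb{S}^{2N-1}$ is a reasonable point of care that the paper leaves implicit, but it does not change the argument.
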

\begin{equation}
\label{eq:PerronSimple}
\PP\rho(\phi)=\sum_{k=1}^K\frac{|\det(M_k ^{-1})|^{ 2}}{\|M_k^{-1}\phi\|^{{ 2}N+2}}\cdot\ \rho\left(
\frac{M_k^{-1}\phi}{\|M_k^{-1}\phi\|}
\right).    
\end{equation}

In the following example, we apply Theorem~\ref{th:sweeping} to the Markov operator in Theorem~\ref{th:QNDPerron}:

\begin{example}
\label{ex:QND}
We consider the quantum system  studied by \cite{Amini2012StabilizationCase-study}, namely the system \eqref{eq:Markov} where
$M_k=\diag(m_k(1),\dots,m_k(N))$, $m_{k}(i) \in (0,1)$, $\sum_{k=1}^Km_k(i)^2=1
$ and $i_1\neq i_2$ implies that $m_k(i_1)\neq m_k(i_2)$ for all $k=1,\dots,K$ and $i=1,\dots,N$. A Lyapunov function has been found for this system in \citep{Amini2012StabilizationCase-study} showing almost sure convergence to the set of elementary vectors (corresponding to Fock states) for almost all initial quantum states. Here, to apply the dual Lyapunov method, we choose
\begin{equation}
    \label{eq:Density}
\rho(\phi)=\frac{1}{\prod_{i=1}^N{|\phi_i|^{ 2}}}.
\end{equation}
Then, applying Theorem~\ref{th:QNDPerron}, we get
\begin{eqnarray*}
    \label{eq:PerronDensity}
    \PP\rho(\phi)&=&\sum_{k=1}^K\frac{(\det(M_k^{-1}))^{2}}{\|M_k^{-1}\phi\|^{{2}N+2}}\cdot\rho\left(\frac{M_k^{-1}\phi}{\|M_k^{-1}\phi\|}
\right)\\
    &=&\sum_{k=1}^K\frac{\prod_{i=1}^N{\left|\frac{1}{m_k(i)}\right|^{2}}}{\|M_k^{-1}\phi\|^{{2}N+2}}\cdot\frac{1}{\prod_{i=1}^N{\left|\frac{{\phi_i}/{m_k(i)}}{\|M_k^{-1}\phi\|}\right|^{2}}}\\
    &=&\left(\sum_{k=1}^K\frac{1}{\|M_k^{-1}\phi\|^{2}}\right)\cdot\frac{1}{\prod_{i=1}^N{|\phi_i|^{2}}}\\
    &<&\frac{1}{\prod_{i=1}^N{|\phi_i|^{2}}}=\rho(\phi),
\end{eqnarray*}
where the last line is proven in \ref{app1}. By Theorem~\ref{th:sweeping}, the system \eqref{eq:Markov} is sweeping to $\bigcup_i\{\phi_i=0\}$. 
  We conjecture that, recursively applying the same argument for each subspace of type $\phi_i=0$, one may reach to the conclusion that $\phi$ converges in probability to the set of Fock states, which correspond to computational basis in $\mathcal{S}_N$. However, we could not rigorously prove this conjecture. 
\end{example}

\section{Conclusion}
The dual Lyapunov approach has been extended to a Markov process defined by a Markov operator and a sufficient condition for the sweeping property of a Markov process has been obtained. The result has been applied to the dynamics of cell size distributions in a cell cycle process and to the dynamics of a quantum system driven by nondemolition measurements. 

Unlike the standard Lyapunov techniques, such as Kushner's invariance theorem (see Theorem~6 in \citep{Amini2012StabilizationCase-study}), the dual method does not require a calculation of the largest invariant set. Conversely, demonstrating the existence of a Lyapunov density, particularly when the nonintegrability condition is limited to the vicinity of the stochastic attractor, can present significant challenges.
Practical tools for certifying the asymptotic behavior of Markov processes may be developed through computational methods designed to construct Lyapunov densities.


\section*{Acknowledgements}
ÖK acknowledges support from Scientific and Technological Research Council of Turkey, grant TÜBİTAK-1001 122E522. RW acknowledges support from Independent Research
Fund Denmark–Natural Sciences, for the profect entitled \emph{Q-Safe: Quantum-Probabilistic Safe Decision Algorithms }. HC acknowledges support from Independent Research
Fund Denmark–Natural Sciences, grant DFF–10.46540/2032-00005B.

\appendix
\section{A Lemma for the Proof of $\mathcal P\rho_1<\rho_1$ in Example~\ref{ex:QND}}
\label{app1}
\begin{lemma}
Let $M_k=\text{diag}(m_k(1),\dots,m_k(N))$ for $k=1,\dots,K$ such that $m_k(i)\in(0,1)$ and $\sum_{k=1}^Km_k(i)^2=1$. Then, for any nonzero $\phi\in\mathbb C^N$,
$$\sum_{k=1}^{K}\frac{\|\phi\|^{2}}{\|M_k^{-1}\phi\|^{2}}\leq 1,$$
where equality is satisfied only when for each $k$, $m_k(s)$ is constant for all $s$ satisfying $\phi_s\neq 0$.
\end{lemma}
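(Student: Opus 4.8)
\section*{Proof proposal}

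The plan is to reduce the claim to a one–line application of the Cauchy--Schwarz inequality (equivalently, the weighted harmonic--arithmetic mean inequality) carried out separately for each $k$, and then to sum over $k$ using the completeness relation $\sum_{k=1}^K m_k(i)^2=1$. First I would write out the two norms explicitly: since $M_k$ is diagonal with positive entries, $\|M_k^{-1}\phi\|^2=\sum_{i=1}^N |\phi_i|^2/m_k(i)^2$ and $\|M_k\phi\|^2=\sum_{i=1}^N |\phi_i|^2 m_k(i)^2$, while $\|\phi\|^2=\sum_{i=1}^N |\phi_i|^2$. Note that $\phi\neq 0$ guarantees $M_k^{-1}\phi\neq 0$ for every $k$, so the left-hand side is well defined.

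The key termwise estimate is
$$\frac{\|\phi\|^2}{\|M_k^{-1}\phi\|^2}\le \frac{\|M_k\phi\|^2}{\|\phi\|^2}=\frac{1}{\|\phi\|^2}\sum_{i=1}^N |\phi_i|^2 m_k(i)^2 ,$$
which follows from Cauchy--Schwarz applied to the factorization $|\phi_i|^2=\big(|\phi_i|/m_k(i)\big)\big(|\phi_i| m_k(i)\big)$, giving $\|\phi\|^4\le\|M_k^{-1}\phi\|^2\,\|M_k\phi\|^2$; equivalently, one may view $\|\phi\|^2/\|M_k^{-1}\phi\|^2$ as the reciprocal of a weighted average of the numbers $m_k(i)^{-2}$ with weights $|\phi_i|^2/\|\phi\|^2$ and apply Jensen's inequality to the convex function $t\mapsto 1/t$.

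I would then sum the termwise bound over $k=1,\dots,K$ and interchange the two finite sums:
$$\sum_{k=1}^K\frac{\|\phi\|^2}{\|M_k^{-1}\phi\|^2}\le \frac{1}{\|\phi\|^2}\sum_{i=1}^N |\phi_i|^2\Big(\sum_{k=1}^K m_k(i)^2\Big)=\frac{1}{\|\phi\|^2}\sum_{i=1}^N|\phi_i|^2=1 ,$$
using $\sum_{k=1}^K m_k(i)^2=1$ for every $i$. This establishes the asserted inequality.

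For the equality statement, observe that the only inequality used is the termwise Cauchy--Schwarz (Jensen) step, so equality in the sum holds if and only if equality holds for every $k$ simultaneously. Equality in Cauchy--Schwarz for the vectors with entries $|\phi_i|/m_k(i)$ and $|\phi_i| m_k(i)$ means they are proportional, i.e.\ there is a constant $\lambda_k$ with $m_k(i)^{-1}=\lambda_k\, m_k(i)$ for every index $i$ with $\phi_i\neq 0$; equivalently, $m_k(i)$ is independent of $i$ on the set $\{i:\phi_i\neq 0\}$. Imposing this for each $k$ gives precisely the stated characterization. I do not expect a genuine obstacle here; the only point requiring a little care is the bookkeeping of the equality condition when some components $\phi_i$ vanish, which is handled cleanly by restricting the Cauchy--Schwarz (or the Jensen weights) to the support $\{i:\phi_i\neq 0\}$ of $\phi$.
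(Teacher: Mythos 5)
Your proposal is correct and follows essentially the same route as the paper: the same Cauchy--Schwarz step with the factorization $|\phi_i|^2=(|\phi_i|/m_k(i))(|\phi_i|m_k(i))$ applied termwise in $k$, followed by summing over $k$ and invoking $\sum_k m_k(i)^2=1$, with the identical equality analysis restricted to the support of $\phi$. The only cosmetic difference is that the paper first normalizes $\|\phi\|=1$ whereas you carry the factor $\|\phi\|^2$ through explicitly.
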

\begin{proof}
Note that multiplying $\phi$ by a scalar does not change the above expression. Hence, without loss of generality, we assume that $\|\phi\|=1$.
By Cauchy-Schwarz inequality, we have
\begin{equation}
\label{eq:CauchySchwarz}
    1=\left(\sum_i|\phi_i|^2\right)^2\leq \left(\sum_i\left(\frac{|\phi_i|}{m_k(i)}\right)^2\right)\cdot\left(\sum_i\left(m_k(i)|\phi_i|\right)^2\right),
\end{equation}
where equality takes place only when $m_k(s)^2$ is constant for indices $s$ satisfying $\phi_s\neq0$ (this condition is automatically satisfied when $\phi_s=0$ for all $s$ but one). 

\eqref{eq:CauchySchwarz} implies that

\begin{equation}\label{dc20}
    \frac{1}{\sum_i\frac{|\phi_i|^2}{m_k(i)^2}}\leq\sum_im_k(i)^2|\phi_i|^2,
\end{equation}
Hence, we have
\begin{equation*}
    \sum_k\frac{1}{\sum_i\frac{|\phi_i|^2}{m_k(i)^2}}\leq\sum_k\sum_im_k(i)^2|\phi_k|^2=\sum_i\sum_km_k(i)^2|\phi_i|^2=\sum_i|\phi_i|^2=1,
\end{equation*}
where equality takes place if, for each $k$, $m_k(s)^2$ is constant for indices $s$ satisfying $\phi_s\neq0$.

\end{proof}

\bibliographystyle{elsarticle-harv} 
\bibliography{references1.bib}{}


\end{document}